\theoremstyle{plain} 
\newtheorem{thm}{Theorem}[section]
\newtheorem*{sat}{Main Theorem}
\newtheorem{lem}[thm]{Lemma}
\theoremstyle{definition}
\newtheorem{defn}[thm]{Definition}
\newtheorem{rem}[thm]{Remark}
\newtheorem{ex}[thm]{Example}
\numberwithin{equation}{section}
\renewcommand{\theta}{\vartheta}
\renewcommand{\phi}{\varphi}
\renewcommand{\epsilon}{\varepsilon}
\renewcommand{\subset}{\subseteq}
\newcommand{\N}{\mathbb N}
\newcommand{\Z}{\mathbb Z}
\newcommand{\C}{\mathbb C}
\DeclareMathOperator{\Aut}{Aut}
\newcommand{\QBan}{G_{aut}^+}
\newcommand{\QBic}{G_{aut}^*}
\begin{document}
\title{Quantum symmetries of graph $C^*$-algebras}
\author{Simon Schmidt}
\author{Moritz Weber}
\address{Saarland University, Fachbereich Mathematik, 
66041 Saarbr\"ucken, Germany}
\email{simon.schmidt@math.uni-sb.de, weber@math.uni-sb.de}
\thanks{The second author was partially funded by the ERC Advanced Grant NCDFP, held by Roland Speicher. This work was part of the first author's Master's thesis. This work was also supported by the DFG project \emph{Quantenautomorphismen von Graphen}}

\date{\today}
\subjclass[2010]{46LXX (Primary); 20B25, 05CXX (Secondary)}
\keywords{finite graphs, graph automorphisms, automorphism groups, quantum automorphisms, graph $C^*$-algebras, quantum groups, quantum symmetries}

\begin{abstract}
The study of graph $C^*$-algebras has a long history in operator algebras. Surprisingly, their quantum symmetries have never been computed so far. We close this gap by proving that the quantum automorphism group of a finite, directed graph without multiple edges acts maximally on the corresponding graph $C^*$-algebra. This shows that the quantum symmetry of a graph coincides with the quantum symmetry of the graph $C^*$-algebra. In our result, we use the definition of quantum automorphism groups of graphs as given by Banica in 2005. Note that Bichon gave a different definition in 2003; our action is inspired from his work. We review and compare these two definitions and we give a complete table of quantum automorphism groups (with respect to either of the two definitions) for undirected graphs on four vertices. 
\end{abstract}

\maketitle

\section*{Introduction}

Symmetry constitutes one of the most important properties of a graph. It is captured by its automorphism group
\[\Aut(\Gamma):=\{\sigma\in S_n\;\mid\;\sigma\epsilon=\epsilon\sigma\}\subset S_n,\]
where $\Gamma=(V,E)$ is a finite graph with $n$ vertices and no multiple edges, $\epsilon\in M_n(\{0,1\})$ is its adjacency matrix, and $S_n$ is the symmetric group. In modern mathematics, notably in operator algebras, symmetries are no longer described only by groups, but by quantum groups. In 2005, Banica \cite{QBan} gave a definition of a quantum automorphism group of a finite graph  within  Woronowicz's theory of compact matrix quantum groups \cite{CMQG2}. In our notation,  $\QBan(\Gamma)$ is based on the $C^*$-algebra 
\allowdisplaybreaks
\begin{align*}
C(&\QBan(\Gamma)):= C(S_n^+) / \langle u\epsilon=\epsilon u\rangle\\
&=C^*(u_{ij}, i,j=1,\ldots,n\;\mid\; u_{ij}=u_{ij}^*=u_{ij}^2, \sum_l u_{il}=1=\sum_l u_{lj}, R_{\mathrm{Ban}} \text{ hold}), 
\end{align*}
where $S_n^+$ is Wang's quantum symmetric group \cite{WanSn} and $R_{\mathrm{Ban}}$ are the relations 
\begin{align*}
\sum_k u_{ik}\epsilon_{kj}=\sum_k \epsilon_{ik}u_{kj}.
\end{align*}
Earlier, in 2003, Bichon \cite{QBic} defined a quantum automorphism group $\QBic(\Gamma)$ via
\begin{align*}
C(&\QBic(\Gamma))\\
&:=C^*(u_{ij}, i,j=1,\ldots,n\;\mid\; u_{ij}=u_{ij}^*=u_{ij}^2, \sum_l u_{il}=1=\sum_l u_{lj}, R_{\mathrm{Bic}} \text{ hold}),
\end{align*}
where $R_{\mathrm{Bic}}$ are the relations
\begin{align*}
\sum_k u_{ik}\epsilon_{kj}=\sum_k \epsilon_{ik}u_{kj}, \quad u_{s(e)s(f)}u_{r(e)r(f)}=u_{r(e)r(f)}u_{s(e)s(f)} \textnormal{ for } e,f\in E,
\end{align*}
and $r:E\to V$ and $s:E\to V$ are range and source maps respectively.
We immediately see that
\[\Aut(\Gamma)\subset\QBic(\Gamma)\subset\QBan(\Gamma)\]
holds, in the sense that there are surjective $^*$-homomorphisms:
\begin{align*}
&C(\QBan(\Gamma))&&\to &&C(\QBic(\Gamma)) &&\to &&C(\Aut(\Gamma))\\
&u_{ij}&&\mapsto&&u_{ij}&&\mapsto&&(\sigma\mapsto \sigma_{ij})
\end{align*}
Relatively little is known about these two quantum automorphism groups of graphs and we refer to Section \ref{SectLit} for an overview on all published articles in this area.

Graph $C^*$-algebras in turn are well-established objects in operator algebras. They emerged from Cuntz and Krieger's work \cite{CK} in the 1980's and they developed to be one of the most important classes of examples of $C^*$-algebras, see for instance Raeburn's book for an overview \cite{Rae}. Given a finite graph $\Gamma=(V,E)$ the associated graph $C^*$-algebra $C^*(\Gamma)$ is defined as 
\begin{align*}
C^*(\Gamma):=C^*(p_v, v\in V, s_e, e\in E\;\mid\;& p_v=p_v^*=p_v^2, \;p_vp_w=0 \textnormal{ for } v\neq w,\\
 &s_e^*s_e=p_{r(e)}, \sum_{\substack{e\in E\\ s(e)=v}} s_es_e^*=p_v, \textnormal{ if } s^{-1}(v)\neq \emptyset).
\end{align*}

A natural question is then:  What is the quantum symmetry group of the graph $C^*$-algebra and is it one of the above two quantum automorphism groups of the underlying graphs? The answer is: It is given by the one  defined by Banica. Note however, that Bichon's definition has its justification in other contexts such as in \cite{GoBhSk,Bic2} or in the recent work by Speicher and the second author \cite{SW16}. Moreover, Bichon's work \cite{QBic} inspired us how to formulate our main theorem, see also Remark \ref{RemBicInsp}.

\pagebreak
\section{Main result}\label{SectMain}

Intuitively speaking, our main result is that the quantum symmetry of a finite graph without multiple edges coincides with the quantum symmetry of the associated graph $C^*$-algebra. In other words, the following diagram is commutative:
\begin{displaymath}
    \xymatrix{
        \textnormal{finite graphs} \ar[dr]_{\Gamma\mapsto\QBan(\Gamma)\;\;\;\;\;\;} \ar[rr]^{\Gamma\mapsto C^*(\Gamma)} &  & \textnormal{graph $C^*$-algebras} \ar[dl]^{\;\;\;\;\;\;\;\;\;\;\;\;C^*(\Gamma)\mapsto \textnormal{QSym}(C^*(\Gamma))}\\
                             & \textnormal{quantum symmetry groups} }
\end{displaymath}

More precisely, we have the following result.

\begin{sat}
Let $\Gamma$ be a finite graph with $n$ vertices $V=\{ 1, ..., n\}$ and $m$ edges $E = \{ e_1, ..., e_m\}$ having no multiple edges. The maps 
\begin{align*}
\alpha: C^*(\Gamma) &\to C(\QBan(\Gamma))\otimes C^*(\Gamma) ,\\
p_i &\mapsto \sum_{k = 1}^n u_{ik} \otimes p_k, &&1 \leq i \leq n,\\
s_{e_j} &\mapsto \sum_{l=1}^m  u_{s(e_j)s(e_l)}u_{r(e_j)r(e_l)} \otimes s_{e_l}, &&1 \leq j \leq m,
\intertext{and}
\beta: C^*(\Gamma) &\to  C(\QBan(\Gamma)) \otimes C^*(\Gamma) ,\\
p_i &\mapsto \sum_{k = 1}^n u_{ki} \otimes  p_k, &&1 \leq i \leq n,\\
s_{e_j} &\mapsto \sum_{l=1}^m  u_{s(e_l)s(e_j)}u_{r(e_l)r(e_j)}\otimes s_{e_l},  &&1 \leq j \leq m
\end{align*} 
define a left and a right action of $\QBan(\Gamma)$ on $C^*(\Gamma)$, respectively. Moreover, whenever $G$ is a compact matrix quantum group acting on $C^*(\Gamma)$ in the above way, we have $G \subset \QBan(\Gamma)$. In this sense, the quantum automorphism group $\QBan(\Gamma)$ of $\Gamma$ is the quantum symmetry group of $C^*(\Gamma)$, see also Remark \ref{RemQSym}.
\end{sat}

We also provide some tools for comparing and dealing with the two definitions of quantum automorphism groups of graphs, $\QBan(\Gamma)$ and $\QBic(\Gamma)$, notably depending on the complement $\Gamma^c$ of $\Gamma$, see Section \ref{SectComplement}. Moreover, we provide a list of all $\Aut(\Gamma)$, $\QBan(\Gamma)$ and $\QBic(\Gamma)$ for undirected graphs $\Gamma$ on four vertices, having no multiple edges and no loops, see Section \ref{SectTabelle}.

\pagebreak
\section{Preliminaries}

\subsection{Graphs}

We fix some notations for graphs used throughout this article. A graph $\Gamma=(V,E)$ is \emph{finite}, if the set $V$ of \emph{vertices} and the set $E$ of \emph{edges} are finite. We denote by $r:E\to V$ the \emph{range map} and by $s:E\to V$ the \emph{source map}. A  graph is \emph{undirected} if for every $e \in E$ there is a $f \in E$ with $s(f) = r(e)$ and $r(f) = s(e)$; it is \emph{directed} otherwise. Elements $e \in E$ with $s(e) = r(e)$ are called \emph{loops}.  A graph \emph{without multiple edges} is a directed graph, where there are no $e,f \in E$, $e \neq f$, such that $s(e) = s(f)$ and $r(e) =r(f)$.  For a finite graph $\Gamma=(V,E)$ with $V=\{1,\ldots,n\}$, its \emph{adjacency matrix} $\epsilon\in M_n(\N_0)$ is defined as $\epsilon_{ij}:=\#\{e\in E\;|\; s(e)=i, r(e)=j\}$. Here $\N_0=\{0,1,2,\ldots\}$.
\textbf{Throughout this article we restrict to finite graphs having no multiple edges.}

If  $\Gamma = (V,E)$ is a directed graph without multiple edges, we denote by $\Gamma^c = (V,E')$ the \emph{complement} of $\Gamma$, where $E' = (V \times V)\backslash E$. Within the category of graphs having no loops, the complement $\Gamma^c$ is defined using $E' = (V \times V)\backslash (E \cup \{(i,i); \ i \in V\})$.

\subsection{Automorphism groups of graphs}

For a finite graph $\Gamma=(V,E)$ without multiple edges, a \emph{graph automorphism} is a bijective map $\sigma: V \to V$ such that $(\sigma(i), \sigma(j)) \in E$ if and only if $ (i,j) \in E$. In other words, $\epsilon_{\sigma(i)\sigma(j)}=1$ if and only if $\epsilon_{ij}=1$. The set of all graph automorphisms of $\Gamma$ forms a group, the \emph{automorphism group} $\Aut(\Gamma)$. We can view $\Aut(\Gamma)$ as a subgroup of the symmetric group $S_n$, if $\Gamma$ has $n$ vertices:
\begin{align*}
\mathrm{Aut}(\Gamma) = \{ \sigma \in S_n\;|\; \sigma \varepsilon = \varepsilon \sigma\}\subset S_n
\end{align*}

\subsection{Graph $C^*$-algebras}\label{SectGraphCStar}

The theory of Graph $C^*$-algebras has its roots in Cuntz and Krieger's work  \cite{CK} in 1980. Nowadays, it forms a well-developed and very active part of the theory of $C^*$-algebras, see \cite{Rae} for an overview or \cite{Eil} for recent developments. For a finite, directed graph $\Gamma=(V,E)$ without multiple edges, the \emph{graph $C^*$-algebra} $C^*(\Gamma)$ is the universal $C^*$-algebra generated by mutually orthogonal projections $p_v$, $v \in V$ and partial isometries $s_e$, $e \in E$ such that 
\begin{itemize}
\item[(i)] $s_e^* s_e = p_{r(e)}$ for all $e \in E$
\item[(ii)] and $p_v = \sum_{e\in E\;:\;s(e)=v} s_e s_e^*$ for every $v \in V$ with $s^{-1}(v) \neq \emptyset$.
\end{itemize}
It follows immediately, that  $s_e^*s_f = 0$ for $e \neq f$ and $\sum_{v\in V} p_v = 1$  hold true in $C^*(\Gamma)$.

\subsection{Compact matrix quantum groups}\label{SectCMQG}

Compact matrix quantum groups were defined by Woronowicz \cite{CMQG1,CMQG2}  in 1987. They form a special class of compact quantum groups, see \cite{Nesh, Tim} for recent books.
A \emph{compact matrix quantum group} $G$ is a pair $(C(G),u)$, where $C(G)$ is a unital (not necessarily commutative) $C^*$-algebra which is generated by $u_{ij}$, $1 \leq i,j \leq n$, the entries of a matrix $u \in M_n(C(G))$. Moreover, the *-homomorphism $\Delta: C(G) \to C(G) \otimes C(G)$, $u_{ij} \mapsto \sum_{k=1}^n u_{ik} \otimes u_{kj}$ must exist, and $u$ and its transpose $u^{t}$ must be invertible. 

\pagebreak

\begin{ex}\label{ExSn}
As an example, consider the \emph{quantum symmetric group} \linebreak$S_n^+ = (C(S_n^+), u)$ as defined by Wang \cite{WanSn} in 1998. It is the compact matrix quantum group given by
\begin{align*}
C(S_n^+) := C^*(u_{ij} \; | \;u_{ij} = u_{ij}^* = u_{ij}^2, \ \sum_{l=1}^n u_{il} = 1 = \sum_{l=1}^n u_{li} \text{ for all $1 \leq i,j \leq n$}).
\end{align*}
One can show that the quotient of $C(S_n^+)$ by the relations that all $u_{ij}$ commute is exactly $C(S_n)$. Moreover, the symmetric group $S_n$ may be viewed as a compact matrix quantum group $S_n=(C(S_n),u)$, where $u_{ij}:S_n\to\C$ are the evaluation maps of the matrix entries. This justifies the name ``quantum symmetric group''.
\end{ex}

If $G=(C(G),u)$ and $H=(C(H), v)$ are compact matrix quantum groups with $u \in M_n(C(G))$ and $v \in M_n(C(H))$, we say that $G$ is a \emph{compact matrix quantum subgroup} of $H$, if there is a surjective *-homomorphism from $C(H)$ to $C(G)$ mapping generators to generators. In this case we write $G \subseteq H$. As an example: $S_n\subset S_n^+$.  The compact matrix quantum groups $G$ and $H$ are equal as compact matrix quantum groups, writing $G =H$, if we have $G \subseteq H$ and $H \subseteq G$. 

\subsection{Actions of quantum groups}

Let $G=(C(G), u)$ be a compact matrix quantum group and let $B$ be a $C^*$-algebra. 
A \emph{left action} of $G$ on $B$ is a unital *-homomorphism $\alpha: B \to C(G) \otimes B$ such that
\begin{itemize}
\item[(i)] $(\Delta \otimes \mathrm{id}) \circ \alpha = (\mathrm{id} \otimes \alpha) \circ \alpha$
\item[(ii)] and $\alpha(B)(C(G) \otimes 1)$ is linearly dense in $C(G) \otimes B$. 
\end{itemize} 
A \emph{right action} is a unital *-homomorphism $\beta: B \to C(G) \otimes B$ with
\begin{itemize}
\item[(i)] $((F \circ\Delta) \otimes \mathrm{id})) \circ \beta = (\mathrm{id} \otimes \beta) \circ \beta$
\item[(ii)] and $\beta(B)(C(G) \otimes 1)$ is linearly dense in $C(G) \otimes B$,
\end{itemize}
where $F$ is the flip map $F: C(G) \otimes C(G) \to C(G) \otimes C(G)$, $a \otimes b \mapsto b \otimes a$. 
Note that in some articles (for instance in \cite{WanSn}), the property (ii) is replaced by 
\begin{itemize}
\item[(ii')] $(\epsilon\otimes\mathrm{id}) \circ \alpha = \mathrm{id}$
\item[(iii')] and there is a dense *-subalgebra of $B$, such that $\alpha$ restricts to a right coaction of the Hopf *-algebra on the *-subalgebra.
\end{itemize}
One can show that (ii') and (iii') are equivalent to (ii), see \cite{Pod}. 

\subsection{Quantum symmetry group of $n$ points}

According to Wang's work \cite{WanSn}, we know that $S_n^+$ (from Example \ref{ExSn}) is the quantum symmetry group of $n$ points in the sense that 
\begin{itemize}
\item[(i)] $S_n^+$ acts from left and right on
\[C^*(p_1,\ldots,p_n\;|\;p_i=p_i^*=p_i^2, \sum_l p_l=1)\]
by $\alpha(p_i):=\sum_{k=1}^n u_{ik}\otimes p_k$ and $\beta(p_i):=\sum_{k=1}^nu_{ki}\otimes p_k$
\item[(ii)] and $S_n^+$ is maximal with these actions, i.e. any other compact matrix quantum group with actions defined as $\alpha$ and $\beta$ is a compact matrix quantum subgroup of $S_n^+$.
\end{itemize}
See also \cite{QISO} for similar questions around quantum symmetries.

\section{Quantum automorphism groups of graphs}

Wang's work in the 1990's was the starting point of the investigations of quantum symmetry phenomena for discrete structures (within Woronowicz's framework). Note that $n$ points may be viewed as the totally disconnected graph on $n$ vertices. A decade later, Banica and Bichon extended Wang's approach to a theory of quantum automorphism groups of finite graphs. In the sequel, we restrict to finite graphs having no multiple edges.

\subsection{Bichon's quantum automorphism group of a graph}

In 2003, Bichon \cite{QBic} defined a quantum automorphism group as follows.

\begin{defn}
Let $\Gamma = (V, E)$ be a finite graph with $n$ vertices $V = \{1, ... , n \}$ and $m$ edges $E = \{ e_1, ... , e_m \}$. The \emph{quantum automorphism group} $\QBic(\Gamma)$ is the compact matrix quantum group $(C(\QBic(\Gamma)), u)$, where $C(\QBic(\Gamma))$ is the universal $C^*$-algebra with generators $u_{ij}$, $1 \leq i,j \leq n$ and relations
\begin{align}
&u_{ij} = u_{ij}^*, \quad u_{ij}u_{ik} = \delta_{jk}u_{ij} ,\quad  u_{ji}u_{ki} = \delta_{jk}u_{ji}, &&1 \leq i,j,k \leq n,\label{QA1}\\ 
&\sum_{l=1}^n u_{il} = 1 = \sum_{l=1}^n u_{li}, &&1 \leq i \leq n,\label{QA2}\\
&u_{s(e_j)i}u_{r(e_j)k} = u_{r(e_j)k}u_{s(e_j)i} = 0, &&e_j \in E, (i,k) \notin E,\label{QA3}\\
&u_{is(e_j)}u_{kr(e_j)} = u_{kr(e_j)}u_{is(e_j)} = 0, &&e_j \in E, (i,k) \notin E,\label{QA4}\\
&u_{s(e_j)s(e_l)}u_{r(e_j)r(e_l)} = u_{r(e_j)r(e_l)}u_{s(e_j)s(e_l)}, &&e_j, e_l \in E.\label{QA5}
\end{align} 
\end{defn}

In the original definition of Bichon, there is actually another relation which is implied by the others:
\begin{align}
\sum_{l=1}^m u_{s(e_l)s(e_j)}u_{r(e_l)r(e_j)} = 1 = \sum_{l=1}^m u_{s(e_j)s(e_l)}u_{r(e_j)r(e_l)}, && e_j \in E \label{QA6}
\end{align}
Indeed, Relations \eqref{QA6} are implied by Relations \eqref{QA2}, \eqref{QA3} and \eqref{QA4}:
\begin{align*}
\sum_{l=1}^m u_{s(e_l)s(e_j)}u_{r(e_l)r(e_j)} = \sum_{i,k =1}^n u_{is(e_j)} u_{kr(e_j)} =\left(\sum_{i=1}^n u_{is(e_j)}\right)\left(\sum_{k=1}^n u_{kr(e_j)}\right)=1
\end{align*} 

\subsection{Banica's quantum automorphism group of a graph}\label{SectBan}

Two years later, Banica \cite{QBan} gave the following definition.

\begin{defn}
Let $\Gamma =(V, E)$ be a finite graph with $n$ vertices and adjacency matrix $\varepsilon \in M_n(\{0,1\})$.  The \emph{quantum automorphism group} $\QBan(\Gamma)$ is the compact matrix quantum group $(C(\QBan(\Gamma)),u)$, where $C(\QBan(\Gamma))$ is the universal $C^*$-algebra with generators $u_{ij}, 1 \leq i,j \leq n$ and Relations \eqref{QA1}, \eqref{QA2} together with
\begin{align}
u \varepsilon = \varepsilon u \label{QA7},
\end{align}
which is nothing but $\sum_ku_{ik}\epsilon_{kj}=\sum_k\epsilon_{ik}u_{kj}$.
\end{defn}

\subsection{Link between the two definitions}

It is easy to see (\cite[Lemma 3.1.1]{Ful} or \cite[Lemma 6.7]{SW16}) that Banica's definition may be expressed as:
\[C(\QBan(\Gamma)) = C^*( u_{ij} \; | \; \textnormal{Relations  \eqref{QA1} -- \eqref{QA4}})\]
We thus have
\begin{align*}
\mathrm{Aut}(\Gamma) \subseteq \QBic(\Gamma) \subseteq \QBan(\Gamma)
\end{align*}
in the sense of compact matrix quantum subgroups, see Section \ref{SectCMQG}. Equality holds, if $C(\QBic(\Gamma))$ and $C(\QBan(\Gamma))$ are commutative.
Moreover, note that (see Example \ref{ExSn}): 
\[C(S_n^+) = C^*(u_{ij} \; | \; \textnormal{Relations  \eqref{QA1} and \eqref{QA2}})\]

\begin{ex}\label{ExQAut}
As an example, let $\Gamma$ be the complete graph (i.e. $E=V\times V$). Then:
\[\Aut(\Gamma)=\QBic(\Gamma)=S_n,\qquad \QBan(\Gamma)=S_n^+\]
For its complement $\Gamma^c$ (i.e. $E=\emptyset$), we have:
\[\Aut(\Gamma^c)=S_n,\qquad \QBic(\Gamma^c)=\QBan(\Gamma^c)=S_n^+\]
\end{ex}

\subsection{Review of the literature on quantum automorphism groups of graphs}\label{SectLit}

 At the moment there are only few articles regarding quantum automorphism groups of graphs. Some results are the following. In \cite{Bic2}, Bichon defined the hyperoctahedral quantum group and showed that this group is the quantum automorphism group of some graph. Banica computed the Poincar\'e series of $\QBan(\Gamma)$ for homogenous graphs with less than eight vertices in \cite{QBan}. Banica, Bichon and Chenevier considered circulant graphs having $p$ vertices for $p$ prime in \cite{Che}. They proved $\QBan(\Gamma) = \mathrm{Aut}(\Gamma)$ if the graph $\Gamma$ does fulfill certain properties. Banica and Bichon investigated  $\QBan(\Gamma)$ for vertex-transitive graphs of order less or equal to eleven in \cite{BanBic}. They also computed $\QBan(\Gamma)$ for the direct product, the Cartesian product and the lexicographic product of specific graphs. Chassaniol also studied the lexicographic product of graphs in \cite{Ca}. In her PhD thesis \cite{Ful}, Fulton studied undirected trees $\Gamma$ such that $\mathrm{Aut}(\Gamma)= \mathbb{Z}_2 \times \mathbb{Z}_2 \times ... \times \mathbb{Z}_2$, where we have $k$ kopies of the cyclic group $\mathbb{Z}_2=\Z/2\Z$. She proved $\mathrm{Aut}(\Gamma) =\QBic(\Gamma)=\QBan(\Gamma)$ for $k=1$ and $\mathrm{Aut}(\Gamma) \neq\QBic(\Gamma)=\QBan(\Gamma)$ for $k\geq 2$. See also \cite{GoBhSk} for links to quantum isometry groups.

\subsection{Comparing with the complement of the graph}\label{SectComplement}

As can be seen from Section \ref{SectLit}, the theory of quantum automorphism groups of graphs is still in its infancy. We now provide some basic results on the link between $\QBic(\Gamma)$ and $\QBic(\Gamma^c)$. Note that while we have
\[\Aut(\Gamma)=\Aut(\Gamma^c)\]
and
\[\QBan(\Gamma)=\QBan(\Gamma^c)\]
 for all graphs $\Gamma$ (using $\epsilon_{\Gamma^c}=A-\epsilon_{\Gamma}$ for the adjacency matrices, with $A\in M_n(\{1\})$ the matrix filled with units, and $uA=A=Au$ by Relation \eqref{QA2}), we may have
 \[\QBic(\Gamma)\neq \QBic(\Gamma^c),\]
 for instance when $\Gamma$ is the complete graph, see Example \ref{ExQAut}.

\begin{lem}\label{same}
If $\QBic(\Gamma) \subset \QBic(\Gamma^c)$, then $\QBic(\Gamma) = \mathrm{Aut}(\Gamma)$. 
\end{lem}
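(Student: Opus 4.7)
The plan is to prove that under the hypothesis, the algebra $C(\QBic(\Gamma))$ is commutative, from which the conclusion $\QBic(\Gamma) = \Aut(\Gamma)$ follows by the remark made just after Relations \eqref{QA1}--\eqref{QA5} (equality holds if $C(\QBic(\Gamma))$ is commutative, since then Gelfand duality forces the quantum group to be classical, hence contained in $\Aut(\Gamma)$).

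The inclusion $\QBic(\Gamma) \subset \QBic(\Gamma^c)$ amounts to a surjective $*$-homomorphism $C(\QBic(\Gamma^c)) \twoheadrightarrow C(\QBic(\Gamma))$ sending generators to generators, so every defining relation of $\QBic(\Gamma^c)$ also holds among the generators $u_{ij}$ of $C(\QBic(\Gamma))$. In particular, both Relation \eqref{QA3} and Relation \eqref{QA5} become simultaneously available for the edge set $E$ of $\Gamma$ and for the edge set $E'$ of $\Gamma^c$. The key combinatorial input is that for any two distinct vertices $a \neq c$, the pair $(a,c)$ belongs to $E \cup E'$; this holds in both conventions of the complement, since the only pairs excluded in the loopless case are the diagonal ones.

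To establish commutativity I would check $u_{ab}u_{cd} = u_{cd}u_{ab}$ by case analysis on $a,b,c,d \in V$. The cases $a=c$ and $b=d$ collapse via Relation \eqref{QA1} to $\delta_{bd}u_{ab}$ and $\delta_{ac}u_{ab}$ respectively, which are manifestly symmetric. Assuming now $a \neq c$ and $b \neq d$, both $(a,c)$ and $(b,d)$ lie in $E \cup E'$. If both lie in $E$ (or both in $E'$), the appropriate copy of Relation \eqref{QA5} for $\Gamma$ (respectively for $\Gamma^c$) gives commutativity directly. If $(a,c) \in E$ but $(b,d) \notin E$, Relation \eqref{QA3} for $\Gamma$ with $e_j$ the edge from $a$ to $c$ yields $u_{ab}u_{cd} = 0 = u_{cd}u_{ab}$; the symmetric mixed case $(a,c) \notin E$, $(b,d) \in E$ is dispatched by Relation \eqref{QA3} transferred from $\Gamma^c$, taking $e_j = (a,c) \in E'$ and noting $(b,d) \notin E'$.

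I do not anticipate a genuine obstacle here; the only care required is keeping indices straight when applying Relation \eqref{QA3} with an edge $e_j$ drawn from $\Gamma^c$ rather than from $\Gamma$, and verifying that the diagonal pairs omitted in the loopless definition of $E'$ never enter the non-trivial cases — which is automatic since those cases presume $a \neq c$ and $b \neq d$.
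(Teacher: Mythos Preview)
Your argument is correct and follows essentially the same route as the paper: show that the generators of $C(\QBic(\Gamma))$ pairwise commute by combining Relation~\eqref{QA5} for $\Gamma^c$ (covering the case $(a,c),(b,d)\notin E$) with Relations~\eqref{QA3}--\eqref{QA5} for $\Gamma$ (the remaining off-diagonal cases), the diagonal cases being handled by~\eqref{QA1}. The only cosmetic difference is that in the mixed case $(a,c)\notin E$, $(b,d)\in E$ you invoke \eqref{QA3} for $\Gamma^c$, whereas the paper would appeal to \eqref{QA4} for $\Gamma$; these yield the same vanishing.
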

\begin{proof}
Relation \eqref{QA5} in $C(\QBic(\Gamma^c))$ implies that $u_{ik}$ and $u_{jl}$ commute in\linebreak $C(\QBic(\Gamma))$ whenever $(i,j)\notin E$ and $(k,l)\notin E$. Together with Relations \eqref{QA3}, \eqref{QA4} and \eqref{QA5} in $C(\QBic(\Gamma))$ this yields commutativity of all generators.
\end{proof}

\begin{lem}\label{compl}
If $\QBic(\Gamma^c) = \QBan(\Gamma^c)$, then $\QBic(\Gamma) = \mathrm{Aut}(\Gamma)$.
\end{lem}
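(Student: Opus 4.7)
The plan is to reduce Lemma \ref{compl} to the previous Lemma \ref{same} by a short chain of inclusions, using the fact recorded just before Lemma \ref{same} that $\QBan(\Gamma) = \QBan(\Gamma^c)$ for every finite graph $\Gamma$ without multiple edges.

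First I would recall the general inclusion $\QBic(\Gamma) \subset \QBan(\Gamma)$, which holds for any $\Gamma$ (since $C(\QBan(\Gamma))$ is obtained from $C(\QBic(\Gamma))$ by forgetting the extra commutation Relation \eqref{QA5}, giving a surjection $C(\QBan(\Gamma)) \twoheadrightarrow C(\QBic(\Gamma))$). Next I would invoke the identity $\QBan(\Gamma) = \QBan(\Gamma^c)$ from the text just above Lemma \ref{same}, which rests on $\epsilon_{\Gamma^c} = A - \epsilon_\Gamma$ together with $uA = A = Au$ coming from Relation \eqref{QA2}. Finally, by the hypothesis of the lemma, $\QBan(\Gamma^c) = \QBic(\Gamma^c)$.

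Chaining these three facts yields
\begin{align*}
\QBic(\Gamma) \;\subset\; \QBan(\Gamma) \;=\; \QBan(\Gamma^c) \;=\; \QBic(\Gamma^c),
\end{align*}
so in particular $\QBic(\Gamma) \subset \QBic(\Gamma^c)$. At this point I would directly apply Lemma \ref{same} to conclude $\QBic(\Gamma) = \mathrm{Aut}(\Gamma)$.

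There is no real obstacle here: the only subtlety is remembering that all of the objects involved are compared as compact matrix quantum subgroups (i.e.\ via surjective $*$-homomorphisms between the defining $C^*$-algebras that send generators to generators), so that composition of the three surjections $C(\QBic(\Gamma^c)) \twoheadrightarrow C(\QBan(\Gamma^c)) = C(\QBan(\Gamma)) \twoheadrightarrow C(\QBic(\Gamma))$ is indeed the inclusion needed to feed into Lemma \ref{same}.
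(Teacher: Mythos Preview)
Your proof is correct and follows exactly the same route as the paper: chain $\QBic(\Gamma)\subset\QBan(\Gamma)=\QBan(\Gamma^c)=\QBic(\Gamma^c)$ and then invoke Lemma~\ref{same}. The paper's proof is the one-line version of what you wrote.
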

\begin{proof}
We have $\QBic(\Gamma)\subset \QBan(\Gamma)=\QBan(\Gamma^c)=\QBic(\Gamma^c)$ and apply Lemma \ref{same}.
\end{proof}

The next lemma shows that the quantum automorphism groups of a graph without loops does not change if we add those. 
\begin{lem}\label{withoutloops}
Let $\Gamma =(V,E)$ be a finite graph without loops. Consider $\Gamma' = (V, E')$ with $E' = E \cup \{ (i,i), i \in V \}$. It holds
\begin{itemize}
\item[(i)] $\QBan(\Gamma) = \QBan(\Gamma')$,
\item[(ii)] $\QBic(\Gamma) = \QBic(\Gamma')$.
\end{itemize}
\end{lem}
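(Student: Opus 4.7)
My plan is to treat the two parts separately, with part (i) being essentially immediate and part (ii) requiring a careful bookkeeping of how relations (QA3)--(QA5) change when one enlarges $E$ to $E' = E \cup \{(i,i) : i \in V\}$.

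For (i), I note that the adjacency matrices satisfy $\epsilon_{\Gamma'} = \epsilon_\Gamma + I_n$, where $I_n$ is the identity matrix, because adding a loop at every vertex simply adds $1$ to every diagonal entry of the adjacency matrix. Since the matrix $u$ trivially satisfies $u I_n = u = I_n u$, the relation $u \epsilon_{\Gamma'} = \epsilon_{\Gamma'} u$ is equivalent to $u \epsilon_\Gamma = \epsilon_\Gamma u$. Hence $C(\QBan(\Gamma))$ and $C(\QBan(\Gamma'))$ are defined by the same universal relations, so they coincide.

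For (ii), the relations \eqref{QA1} and \eqref{QA2} are the same for $\Gamma$ and $\Gamma'$, so I only need to compare \eqref{QA3}, \eqref{QA4} and \eqref{QA5}. The strategy is to show that any relation occurring in the defining presentation of one algebra but not the other is an automatic consequence of \eqref{QA1}. I isolate two sources of discrepancy:
\begin{itemize}
\item[(a)] Relations in $C(\QBic(\Gamma))$ but not in $C(\QBic(\Gamma'))$: these come from non-edge pairs $(i,k)\notin E$ with $i=k$, which are non-edges in $\Gamma$ but edges in $\Gamma'$. In \eqref{QA3} this gives $u_{s(e_j)i}u_{r(e_j)i}=0$ for $e_j \in E$; since $\Gamma$ is loopless we have $s(e_j)\neq r(e_j)$, so $u_{s(e_j)i}u_{r(e_j)i}=0$ is a direct consequence of \eqref{QA1}. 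The symmetric argument handles \eqref{QA4}.
\item[(b)] Relations in $C(\QBic(\Gamma'))$ but not in $C(\QBic(\Gamma))$: these come from taking $e_j \in E'\setminus E$ (a loop, so $s(e_j)=r(e_j)=i$) in \eqref{QA3}, \eqref{QA4} or \eqref{QA5}. In each case, two of the four matrix-entry indices coincide, and one of the Latin-square identities $u_{ji'}u_{jk'}=\delta_{i'k'}u_{ji'}$ or $u_{i'j}u_{k'j}=\delta_{i'k'}u_{i'j}$ from \eqref{QA1} renders the relation automatic (the \eqref{QA5} case even becomes $0=0$ or a trivial identity when $e_l$ is or is not a loop).
\end{itemize}
Since all ``extra'' relations on either side follow from \eqref{QA1}, the universal $C^*$-algebras $C(\QBic(\Gamma))$ and $C(\QBic(\Gamma'))$ satisfy the same relations and are therefore equal as compact matrix quantum groups.

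The only delicate point I anticipate is the bookkeeping in (b) for \eqref{QA5}: one must verify the asserted automaticity in all four sub-cases (neither, one, or both of $e_j, e_l$ a loop), but each reduces directly to \eqref{QA1}. The loopless hypothesis on $\Gamma$ is essential in (a), since it ensures $s(e_j)\neq r(e_j)$ for the old edges.
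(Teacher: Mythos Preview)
Your proof is correct and follows the same underlying idea as the paper's: the additional relations appearing on either side all collapse to consequences of the orthogonality relations \eqref{QA1}. The paper's treatment of (ii) is terser because it implicitly invokes part (i) together with the characterisation $C(\QBan(\Gamma))=C^*(u_{ij}\mid\textnormal{Relations \eqref{QA1}--\eqref{QA4}})$ to conclude that \eqref{QA3} and \eqref{QA4} already agree for $\Gamma$ and $\Gamma'$, leaving only the extra \eqref{QA5} relations (those involving a loop) to be verified; you instead check \eqref{QA3} and \eqref{QA4} directly, which is more self-contained but a bit longer.
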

\begin{proof}
For (i), we use $\epsilon_{\Gamma'}=1+\epsilon_\Gamma$, where $1$ is the identity matrix in $M_n(\{0,1\})$. Thus, $u \varepsilon_{\Gamma} = \varepsilon_{\Gamma} u$ is equivalent to $u \varepsilon_{\Gamma'} = \varepsilon_{\Gamma'} u$. 

For (ii), all we need to check is that  $u_{is(e_j)}u_{ir(e_j)} = u_{ir(e_j)}u_{is(e_j)}$ is fulfilled in $C(\QBic(\Gamma))$ for all $i \in V$, $e_j \in E$, which is true due to Relation \eqref{QA1}.
\end{proof}

\subsection{Quantum automorphism groups on four vertices}\label{SectTabelle}

For a small number of vertices of undirected graphs, a complete classification of $\QBic(\Gamma)$ and $\QBan(\Gamma)$ is possible. For $n\in\{1,2,3\}$, we have $C(S_n^+)=C(S_n)$, hence $\Aut(\Gamma)=\QBic(\Gamma)=\QBan(\Gamma)$. For $n=4$, we now provide a complete table for graphs having no loops. We restrict to undirected graphs in order to keep it simple. We need the following lemma to compute the quantum automorphism groups.

\begin{lem}\label{eqzero}
Let $\Gamma = (V,E)$ be a finite graph with $V=\{ 1, ..., n\}$ and let $e_j \in E$. Let $q\in V$ with $s^{-1}(q)=\emptyset$. For the generators of $C(\QBan(\Gamma))$ it holds
\begin{align*}
u_{q s(e_j)} = 0 = u_{s(e_j)q}. 
\end{align*}
\end{lem}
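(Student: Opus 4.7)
The strategy is to leverage the auxiliary relations \eqref{QA3} and \eqref{QA4}, which the authors remark in Section~3.3 already hold inside $C(\QBan(\Gamma))$ (via \cite[Lemma~3.1.1]{Ful} or \cite[Lemma~6.7]{SW16}), even though Banica originally only imposed \eqref{QA1}, \eqref{QA2} and \eqref{QA7}. The hypothesis $s^{-1}(q) = \emptyset$ translates into the combinatorial fact that $(q,k) \notin E$ for every $k \in V$, and this is the sole ingredient we need in order to turn the bilinear vanishing relations \eqref{QA3}, \eqref{QA4} into statements about single generators once we sum over the free index.

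First, I would handle $u_{s(e_j)q} = 0$. Apply \eqref{QA3} to the edge $e_j \in E$ with the choice $i = q$ and $k$ arbitrary. Because $s^{-1}(q) = \emptyset$, the pair $(q,k)$ is never in $E$, so \eqref{QA3} gives
\begin{equation*}
u_{s(e_j)q}\,u_{r(e_j)k} \;=\; 0 \qquad \text{for every } k \in V.
\end{equation*}
Summing this identity over $k$ and using the row sum $\sum_{k=1}^n u_{r(e_j)k} = 1$ from \eqref{QA2} yields
\begin{equation*}
u_{s(e_j)q} \;=\; u_{s(e_j)q} \cdot 1 \;=\; \sum_{k=1}^n u_{s(e_j)q}\,u_{r(e_j)k} \;=\; 0.
\end{equation*}

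The identity $u_{qs(e_j)} = 0$ is then proved by the exact dual argument, swapping rows and columns. Applying \eqref{QA4} to $e_j$ with $i = q$ and $k$ arbitrary, and again using $(q,k) \notin E$ for all $k$, produces $u_{qs(e_j)}\,u_{kr(e_j)} = 0$ for every $k$; summing over $k$ and using the column sum $\sum_{k=1}^n u_{kr(e_j)} = 1$ forces $u_{qs(e_j)} = 0$.

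There is no real obstacle here; the only subtlety is remembering to invoke the reformulation of Banica's definition in terms of \eqref{QA1}--\eqref{QA4} so that \eqref{QA3} and \eqref{QA4} are available in $C(\QBan(\Gamma))$. If one preferred to argue directly from the matrix identity \eqref{QA7}, the first half can alternatively be read off the $(q, r(e_j))$-entry of $u\epsilon = \epsilon u$: the row $\epsilon_{q\,\cdot}$ vanishes, so $\sum_{k : (k,r(e_j)) \in E} u_{qk} = 0$, and since this is a sum of projections each summand vanishes, in particular the term $k = s(e_j)$.
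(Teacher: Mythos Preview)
Your argument is correct and is essentially the same as the paper's: multiply the generator in question by a row or column sum equal to $1$ via \eqref{QA2}, then kill every term using \eqref{QA3} or \eqref{QA4} together with $(q,k)\notin E$ for all $k$. The only cosmetic difference is that the paper treats $u_{qs(e_j)}$ first (using \eqref{QA4} and the column sum $\sum_i u_{ir(e_j)}=1$) and then says ``likewise'' for $u_{s(e_j)q}$, whereas you do the two cases in the opposite order; your closing alternative via \eqref{QA7} is a nice extra observation not in the paper.
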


\begin{proof}
By Relations \eqref{QA2} and \eqref{QA4}, we get 
\begin{align*}
u_{q s(e_j)} =  u_{qs(e_j)}\left(\sum_{i =1}^n u_{ir(e_j)}\right) = \sum_{i=1}^n  u_{qs(e_j)} u_{ir(e_j)}= 0,
\end{align*}
because $(q,i) \notin E$ for all $i \in V$. Likewise, we get $u_{s(e_j)q} = 0$. 
\end{proof}

 In the following, $D_4$ denotes the dihedral group defined as 
\begin{align*}
D_4 := \langle x,y \,|\, x^2=y^2=(xy)^4 = e \rangle,
\end{align*}
$H_2^+$ denotes the hyperoctahedral quantum group defined by Bichon in \cite{Bic2} and $\Z_2$ denotes the cyclic group $\Z/2\Z$.
The quantum group $\widehat{\Z_2 *\Z_2}=(C^*(\Z_2*\Z_2),u)$ is understood as the compact matrix quantum group with matrix 
\[\begin{pmatrix}
p&1-p&0&0\\
1-p&p&0&0\\
0&0&q&1-q\\
0&0&1-q&q
\end{pmatrix}\]
where $C^*(\Z_2*\Z_2)$ is seen as the universal unital $C^*$-algebra generated by two projections $p$ and $q$.
Recall that $\Aut(\Gamma)=\Aut(\Gamma^c)$ and $\QBan(\Gamma)=\QBan(\Gamma^c)$, where $\Gamma^c$ is the complement of $\Gamma$ within the category of graphs having no loops.  Parts of the following table were also computed in \cite{BanBic} and \cite{Bic2}. 

\setlength{\unitlength}{0.5cm}
\newsavebox{\boxgraph}
   \savebox{\boxgraph}
   { \begin{picture}(0.7,0.5)
     \put(-0.5,-0.5){$\bullet$}
     \put(-0.5,0.5){$\bullet$}
     \put(0.5,-0.5){$\bullet$}
     \put(0.5,0.5){$\bullet$}
     \end{picture}}
     
\newsavebox{\lineo}
   \savebox{\lineo}
   { \begin{picture}(0.7,0.5)
     \put(-0.25,0.75){\line(1,0){1}}
     \end{picture}}
    
\newsavebox{\lineu}
   \savebox{\lineu}
   { \begin{picture}(0.7,0.5)
     \put(-0.25, -0.25){\line(1,0){1}}
     \end{picture}}    
     
\newsavebox{\linel}
   \savebox{\linel}
   { \begin{picture}(0.7,0.5)
     \put(-0.35, -0.25){\line(0,1){1}}
     \end{picture}}        
     
\newsavebox{\liner}
   \savebox{\liner}
   { \begin{picture}(0.7,0.5)
     \put(0.7, -0.25){\line(0,1){1}}
     \end{picture}}

\newsavebox{\linelr}
   \savebox{\linelr}
   { \begin{picture}(0.7,0.5)
     \put(0.7, -0.25){\line(-1,1){1}}
     \end{picture}}

\newsavebox{\linerl}
   \savebox{\linerl}
   { \begin{picture}(0.7,0.5)
     \put(-0.25, -0.25){\line(1,1){1}}
     \end{picture}}

\begin{thm}
Let $\Gamma$ be an undirected graph  on four vertices having no loops and no multiple edges. Then:
\end{thm}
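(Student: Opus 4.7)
Up to isomorphism there are eleven undirected simple graphs on four vertices without loops. By Section~\ref{SectComplement} we have $\Aut(\Gamma)=\Aut(\Gamma^c)$ and $\QBan(\Gamma)=\QBan(\Gamma^c)$, so these eleven graphs group into five complementary pairs together with the self-complementary path $P_4$, and $\Aut$ and $\QBan$ need only be computed on six representatives. The invariant $\QBic$, however, is not invariant under taking complements (cf.\ Example~\ref{ExQAut}), so it must be computed on all eleven graphs.

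The plan is to handle the extreme and exceptional cases first and to show that every remaining graph has classical quantum automorphism groups. For the empty graph and the complete graph $K_4$, Example~\ref{ExQAut} gives $\QBan=S_4^+$ together with $\QBic$ equal to $S_4^+$ or $S_4$ respectively. The remaining non-classical cases are the four-cycle $C_4$ and its complement $2K_2$ (two disjoint edges); here I would adapt Bichon's computation from \cite{Bic2} to identify $\QBan(C_4)=\QBan(2K_2)=H_2^+$. A short direct manipulation of Relations \eqref{QA3}--\eqref{QA5} then yields $\QBic(2K_2)=\widehat{\Z_2*\Z_2}$ and $\QBic(C_4)=D_4$, using in each case the $2\times 2$ block decomposition of the magic unitary $u$ induced by the two components, respectively antipodal pairs, of the graph.

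For each of the remaining seven graphs I would show that all generators $u_{ij}$ of $C(\QBan(\Gamma))$ pairwise commute; combined with Section~\ref{SectBan}, this forces $\QBan(\Gamma)=\QBic(\Gamma)=\Aut(\Gamma)$. The first tool is Lemma~\ref{eqzero}: whenever $\Gamma$ has an isolated vertex $q$, all generators in row and column $q$ vanish, collapsing the computation to a graph on three vertices, where $S_3^+=S_3$ gives commutativity for free. For graphs without isolated vertices (such as $P_4$, $K_4$ minus an edge, the triangle with a pendant edge, and the star $K_{1,3}$), the strategy is to read off from the degree sequence which entries $u_{ij}$ must vanish in order for \eqref{QA7} to be consistent, and then to multiply \eqref{QA7} on both sides by the surviving generators, producing enough commutators $[u_{ij},u_{kl}]=0$ to conclude. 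Lemma~\ref{compl} is a convenient shortcut: for any graph whose complement lies in the list with classical $\QBan$, we immediately obtain $\QBic(\Gamma)=\Aut(\Gamma)$, avoiding repeated work.

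The main obstacle is the explicit identification of the four non-classical quantum groups $S_4^+$, $H_2^+$, $\widehat{\Z_2*\Z_2}$ and $D_4$ in terms of the defining relations of $C(\QBan(\Gamma))$ or $C(\QBic(\Gamma))$. One direction, the surjection from the universal $C^*$-algebra onto the model, is routine; the reverse requires exhibiting an explicit $*$-homomorphism from the model algebra to $C(\QBan(\Gamma))$ or $C(\QBic(\Gamma))$ sending generators to the correct block-matrix entries. Carrying this out cleanly for each of the four non-classical cases is the technical heart of the proof, whereas the degree-based commutativity arguments for the other seven graphs are uniform and can be organised by degree sequence to minimise repetition.
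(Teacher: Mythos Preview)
Your plan contains a genuine gap: you overlook one non-classical case. The graph consisting of a single edge together with two isolated vertices (row~(2) of the table) has $\QBan(\Gamma)=\widehat{\Z_2*\Z_2}$, which is \emph{not} commutative, and the same holds for its complement $K_4$ minus an edge. Your program of proving commutativity of $C(\QBan(\Gamma))$ for all seven ``remaining'' graphs would therefore fail on this pair. The error is in your use of Lemma~\ref{eqzero}: for an isolated vertex $q$ the lemma only kills $u_{qv}$ and $u_{vq}$ when $v$ is the source of some edge, not for all $v$. When there are \emph{two} isolated vertices $q,q'$, the entries $u_{qq'},u_{q'q}$ survive, and the resulting $2\times2$ block need not commute with the $2\times2$ block coming from the edge. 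This is precisely how the two free projections generating $C^*(\Z_2*\Z_2)$ arise.

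A second slip: your value $\QBic(2K_2)=\widehat{\Z_2*\Z_2}$ is incorrect. The paper shows $\QBic(2K_2)=\QBan(2K_2)=H_2^+$; the extra commutation relations~\eqref{QA5} are automatic here because the block structure forced by $u\epsilon=\epsilon u$ already gives $u_{ij}=u_{kl}$ whenever $(i,k),(j,l)\in E$, so $u_{ij}u_{kl}=u_{kl}^2=u_{kl}u_{ij}$. The quantum group $\widehat{\Z_2*\Z_2}$ belongs to the single-edge graph, not to $2K_2$. Apart from this misattribution, your overall strategy --- treat the extremes via Example~\ref{ExQAut}, identify $H_2^+$ for the $2K_2/C_4$ pair, use Lemma~\ref{eqzero} to reduce graphs with an isolated vertex, and read off vanishing entries from $u\epsilon=\epsilon u$ for $P_4$ --- matches the paper's approach once the single-edge case is moved to the non-classical side.
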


%\begin{figure}[h]
\setlength{\tabcolsep}{3mm}
\renewcommand{\arraystretch}{1.4}
\begin{tabular}[t]{c p{0.5cm} p{0.5cm} c c c c}
&$\Gamma$ & $\Gamma^c$ &$\mathrm{Aut}(\Gamma)$ & $\QBic(\Gamma^c)$ & $\QBic(\Gamma)$ & $\QBan(\Gamma)$ \\
\toprule
(1)
&\begin{picture}(0,0) \put(0,0){\usebox{\boxgraph}}\end{picture} 
&\begin{picture}(0,0) \put(0,0){\usebox{\boxgraph}}\put(0,0){\usebox{\linelr}}\put(0,0){\usebox{\linel}}\put(0,0){\usebox{\lineu}}\put(0,0){\usebox{\lineo}}\put(0,0){\usebox{\liner}}\put(0,0){\usebox{\linerl}}\end{picture} 
& $S_4$&$S_4$&$S_4^{+}$& $S_4^{+}$ \\
\midrule
(2)
&\begin{picture}(0,0) \put(0,0){\usebox{\boxgraph}}\put(0,0){\usebox{\lineo}}\end{picture}
& \begin{picture}(0,0) \put(0,0){\usebox{\boxgraph}}\put(0,0){\usebox{\linelr}}\put(0,0){\usebox{\linel}}\put(0,0){\usebox{\lineu}}\put(0,0){\usebox{\lineo}}\put(0,0){\usebox{\liner}}\end{picture}
&$\mathbb{Z}_2 \times \mathbb{Z}_2$&$\mathbb{Z}_2 \times \mathbb{Z}_2$ & $\widehat{\mathbb{Z}_2 * \mathbb{Z}_2}$ & $\widehat{\mathbb{Z}_2 * \mathbb{Z}_2}$\\ 
\midrule
(3)
&\begin{picture}(0,0) \put(0,0){\usebox{\boxgraph}}\put(0,0){\usebox{\lineo}}\put(0,0){\usebox{\linel}}\end{picture}
& \begin{picture}(0,0) \put(0,0){\usebox{\boxgraph}}\put(0,0){\usebox{\linelr}}\put(0,0){\usebox{\linel}}\put(0,0){\usebox{\lineu}}\put(0,0){\usebox{\lineo}}\end{picture}
&$\mathbb{Z}_2$&$\mathbb{Z}_2$ & $\mathbb{Z}_2$ & $\mathbb{Z}_2$\\
\midrule
(4)
&\begin{picture}(0,0) \put(0,0){\usebox{\boxgraph}}\put(0,0){\usebox{\lineo}}\put(0,0){\usebox{\lineu}}\end{picture}
&\begin{picture}(0,0) \put(0,0){\usebox{\boxgraph}}\put(0,0){\usebox{\lineo}}\put(0,0){\usebox{\liner}}\put(0,0){\usebox{\linel}}\put(0,0){\usebox{\lineu}}\end{picture} 
&$D_4$&$D_4$ & $H_2^+$ & $H_2^+$\\
\midrule
(5)
&\begin{picture}(0,0) \put(0,0){\usebox{\boxgraph}}\put(0,0){\usebox{\linel}}\put(0,0){\usebox{\lineo}}\put(0,0){\usebox{\linerl}}\end{picture} 
&\begin{picture}(0,0) \put(0,0){\usebox{\boxgraph}}\put(0,0){\usebox{\linelr}}\put(0,0){\usebox{\linel}}\put(0,0){\usebox{\lineo}}\end{picture} 
&$S_3$&$S_3$ & $S_3$ & $S_3$\\
\midrule
(6)
& \begin{picture}(0,0) \put(0,0){\usebox{\boxgraph}}\put(0,0){\usebox{\lineo}}\put(0,0){\usebox{\linel}}\put(0,0){\usebox{\liner}}\end{picture}
&\begin{picture}(0,0) \put(0,0){\usebox{\boxgraph}}\put(0,0){\usebox{\lineo}}\put(0,0){\usebox{\linel}}\put(0,0){\usebox{\liner}}\end{picture}
&$\mathbb{Z}_2$&$\mathbb{Z}_2$ & $\mathbb{Z}_2$ & $\mathbb{Z}_2$\\
\bottomrule
\end{tabular}%\end{figure}

%\pagebreak

\begin{proof}
For every row of the table, we compute $\QBan(\Gamma)$ and we show $\QBan(\Gamma)=\QBic(\Gamma)$. We then obtain $\QBic(\Gamma^c)$ by using Lemma \ref{compl}. We label the points of the graphs as follows:

\begin{center}
\begin{picture}(3,3) 
\put(1,1){\usebox{\boxgraph}}
\put(0,2)1
\put(2.5,2)2
\put(0,0)3
\put(2.5,0)4
\end{picture}
\end{center}

\begin{itemize}
\item[(1)] Obvious, see Example \ref{ExQAut}.

\item[(2)] Let $(u_{ij})_{1 \leq i,j \leq 4}$ be the generators of $C(\QBan(\Gamma))$. Lemma \ref{eqzero} yields 
\begin{align*}
u_{31}= u_{32}= u_{41}= u_{42}= u_{13}= u_{23} =u_{14} =u_{24}= 0.
\end{align*}
With Relations \eqref{QA2} we deduce
\begin{align*}
u=\begin{pmatrix} u_{11} & 1 - u_{11} & 0 & 0\\ 1 - u_{11} & u_{11} & 0&0 \\ 0& 0& u_{33}& 1- u_{33}\\ 0& 0& 1- u_{33}& u_{33}\end{pmatrix}.
\end{align*}
Thus
\begin{align*}
\QBan(\Gamma) = \widehat{\mathbb{Z}_2 * \mathbb{Z}_2}.
\end{align*}
Since $u_{ij}u_{kl} = u_{kl}u_{ij}$ holds for $(i,k), (j,l) \in \{(1,2), (2,1)\}$ in \linebreak$C(\QBan(\Gamma))$, we get $\QBic(\Gamma)=\QBan(\Gamma)$.

\item[(3)]
Lemma \ref{eqzero} yields 
\begin{align*}
u_{14} = u_{24} = u_{34} = u_{41} = u_{42} = u_{43} = 0.
\end{align*}
This implies
\begin{align*}
\QBan(\Gamma) \subseteq S_3^+ = S_3, 
\end{align*}
thus $\QBan(\Gamma)$ is commutative and hence $\QBan(\Gamma)=\QBic(\Gamma)=\Aut(\Gamma)=\Z_2$.

\item[(4)]Let $\Delta$ and $\Delta'$ be the comultiplication maps of $\QBan(\Gamma)$ and $H_2^+$, respectively. We first show that these two quantum groups coincide as compact quantum groups, i.e. there is a $^*$-isomorphism 
\[\phi: C(H_2^+)\to C(\QBan(\Gamma))\]
such that $\Delta'\circ\phi=(\phi\otimes\phi)\circ\Delta$.\\

%\pagebreak

\noindent \emph{Step 1: The map $\phi$ exists and we have $\Delta'\circ\phi=(\phi\otimes\phi)\circ\Delta$.}\\
From $\varepsilon u = u \varepsilon$ we get
\begin{align*}
u = \begin{pmatrix} u_{11} & u_{12} & u_{13} & u_{14}\\ u_{12} & u_{11} &u_{14} & u_{13}\\ u_{31}& u_{32} & u_{33} & u_{34}\\ u_{32}& u_{31}& u_{34}& u_{33} \end{pmatrix}.
\end{align*}
Define $v_{11} := u_{11} - u_{12}$, $v_{12} := u_{13} - u_{14}$, $v_{21} := u_{31} - u_{32}$ and $v_{22} := u_{33} - u_{34}$. One can compute that
 $v_{ij}$, $i,j =1,2$ fulfill the relations of $C(H_2^+)$ and with the universal property we get a *-homomorphism $\varphi: C(H_2^+) \to C(\QBan(\Gamma))$. Since $\Delta' \circ \varphi = (\varphi \otimes \varphi) \circ \Delta$ also holds, we get that $\QBan(\Gamma)$ is a quantum subgroup of $H_2^+$.\\

\noindent \emph{Step 2: The map $\phi$ is a $^*$-isomorphism.}\\
Let $(v_{ij})_{i,j=1,2}$ be the generators of $C(H_2^+)$. Define 
\begin{align*}
u_{11} &:= u_{22}:= \frac{v_{11}^2 + v_{11}}{2}, \qquad u_{12} := u_{21}:= \frac{v_{11}^2 - v_{11}}{2},\\
u_{13} &:= u_{24}:= \frac{v_{12}^2 + v_{12}}{2}, \qquad u_{14} := u_{23}:= \frac{v_{12}^2 - v_{12}}{2},\\
u_{31} &:= u_{42}:= \frac{v_{21}^2 + v_{21}}{2}, \qquad u_{41} := u_{32}:= \frac{v_{21}^2 - v_{21}}{2},\\
u_{33} &:= u_{44}:= \frac{v_{22}^2 + v_{22}}{2}, \qquad u_{34} := u_{43}:= \frac{v_{22}^2 - v_{22}}{2}.
\end{align*}
One can show that the $(u_{ij})_{1 \leq i,j \leq 4}$ fulfill the relations of $C(\QBan(\Gamma))$. The universal property now gives us a *-homomorphism $\varphi' : C(\QBan(\Gamma)) \to C(H_2^+)$ and $\phi'$ is the inverse of $\phi$ and vice versa.\\ 

\noindent \emph{Step 3: We have $\QBan(\Gamma) = \QBic(\Gamma)$.}\\
We have seen in Step 1, that
\begin{align*}
u_{11} = u_{22},\quad u_{12} &= u_{21}, \quad u_{13} = u_{24}, \quad u_{14} = u_{23}, \\
u_{31} = u_{42}, \quad u_{32} &= u_{41}, \quad u_{33} = u_{44}, \quad u_{34} = u_{43}
\intertext{ and therefore we get}
u_{ij} u_{kl} &= u_{kl}^2 = u_{kl} u_{ij} 
\end{align*}  
for all $(i,k), (j,l) \in E$. Thus $\QBan(\Gamma) = \QBic(\Gamma)$. 

%\pagebreak

\item[(5)] We conclude as in (3).

\item[(6)] Some direct computations using $\varepsilon u = u \varepsilon$ and Relations \eqref{QA2} show
\begin{align*}
u = \begin{pmatrix} u_{33}& 1-u_{33}& 0&0\\ 1-u_{33} & u_{33}& 0& 0\\ 0& 0& u_{33}& 1-u_{33}\\ 0& 0& 1-u_{33}& u_{33}\end{pmatrix}.
\end{align*}
Thus $\QBan(\Gamma)$ is commutative.
\end{itemize}
\end{proof}

\pagebreak

\section{Proof of the main result}

We now prove the main result of this article (see Section \ref{SectMain}) for a finite graph $\Gamma$ with vertices $V=\{1,\ldots,n\}$ and edges $E=\{e_1,\ldots,e_m\}$ having no multiple edges.

\begin{rem}\label{RemQSym}
We define the quantum symmetry group $\textnormal{QSym}(C^*(\Gamma))$ of $C^*(\Gamma)$ to be the maximal compact matrix quantum group $G$ acting on $C^*(\Gamma)$ by $\alpha: C^*(\Gamma) \to C(G)\otimes C^*(\Gamma)$ and $\beta: C^*(\Gamma) \to C(G)\otimes C^*(\Gamma)$ as defined in the statement of our main theorem. We thus have to show that $\QBan(\Gamma)$ acts on $C^*(\Gamma)$ via $\alpha$ and $\beta$ (see Sections \ref{existence} and \ref{Step42}) and that it is maximal with these actions (see Section \ref{maximal}).
\end{rem}

\subsection{Existence of the maps $\alpha$ and $\beta$}\label{existence}

In order to prove that 
\begin{align*}
\alpha: C^*(\Gamma) &\to C(\QBan(\Gamma))\otimes C^*(\Gamma) \\
p_i &\mapsto p_i':=\sum_{k = 1}^n u_{ik} \otimes p_k, &&1 \leq i \leq n\\
s_{e_j} &\mapsto s_{e_j}':=\sum_{l=1}^m  u_{s(e_j)s(e_l)}u_{r(e_j)r(e_l)} \otimes s_{e_l}, &&1 \leq j \leq m
\end{align*} 
defines a $*$-homomorphism, all we have to show is that the relations of $C^*(\Gamma)$ hold for $p_i'$ and $s_{e_j}'$. We may then use the universal property of $C^*(\Gamma)$. The proof for the existence of $\beta$ is analogous.

\subsubsection{The $p_i'$ are mutually orthogonal projections}

Obviously, $p_i' = (p_i')^*$ holds. Moreover, using $p_kp_l=\delta_{kl}p_k$ and Relations \eqref{QA1}, we have
\[p_i'p_j'=\sum_{k,l=1}^n u_{ik}u_{jl}\otimes p_kp_l=\sum_{k=1}^n u_{ik}u_{jk}\otimes p_k=\delta_{ij}p_i'.\]

\subsubsection{The $s_{e_j}'$ are partial isometries with $(s_{e_j}')^* s_{e_j}' = p_{r(e_j)}'$}

Using $s_{e_l}^*s_{ei}=\delta_{il}p_{r(e_i)}$ (see Section \ref{SectGraphCStar}) and Relations \eqref{QA1}, we have
\begin{align*}
(s_{e_j}')^* s_{e_j}'
&=\sum_{l,i = 1}^m u_{r(e_j)r(e_l)}u_{s(e_j)s(e_l)}u_{s(e_j)s(e_i)}u_{r(e_j)r(e_i)} \otimes s_{e_l}^*s_{e_i}\\
&=\sum_{i = 1}^m u_{r(e_j)r(e_i)}u_{s(e_j)s(e_i)}u_{r(e_j)r(e_i)} \otimes p_{r(e_i)}.
\end{align*}
By Relations \eqref{QA3} we have $u_{r(e_j)j'}u_{s(e_j)i'} u_{r(e_j)j'} = 0$ for $(i',j') \notin E$. This yields
\begin{align*} 
\sum_{i = 1}^m  u_{r(e_j)r(e_i)}u_{s(e_j)s(e_i)}u_{r(e_j)r(e_i)} \otimes p_{r(e_i)}= \sum_{i', j' =1}^n u_{r(e_j)j'}u_{s(e_j)i'}u_{r(e_j)j'} \otimes p_{j'}.
\end{align*}
Using Relations \eqref{QA2}, we obtain $\sum_{i=1}^nu_{s(e_j)i'}=1$ and thus
\begin{align*}
(s_{e_j}')^* s_{e_j}' &=\sum_{i', j' =1}^n u_{r(e_j)j'}u_{s(e_j)i'}u_{r(e_j)j'} \otimes p_{j'}=\sum_{j' = 1}^n u_{r(e_j)j'} \otimes p_{j'}=p_{r(e_j)}'. 
\end{align*}

\subsubsection{We have $\sum_{j:\;s(e_j)= v} s_{e_j}' (s_{e_j}')^* = p_v'$ for $s^{-1}(v)\neq\emptyset$}
\label{Step413}

Using Relations \eqref{QA1}, we get for $v\in V$ with $s^{-1}(v)\neq\emptyset$:
\begin{align*}
\sum_{\substack{j\in\{1,\ldots,m\}\\s(e_j)= v}} s_{e_j}' (s_{e_j}')^* 
&=\sum_{\substack{j\in\{1,\ldots,m\}\\s(e_j)= v}}\sum_{i,l=1}^m u_{vs(e_l)}u_{r(e_j)r(e_l)}u_{r(e_j)r(e_i)}u_{vs(e_i)}\otimes s_{e_l}s_{e_i}^*\\
&= \sum_{l=1}^m \sum_{\substack{i\in\{1,\ldots,m\}\\r(e_i)=r(e_l)}} u_{vs(e_l)}\left(\sum_{\substack{j\in\{1,\ldots,m\}\\s(e_j)= v}}u_{r(e_j)r(e_l)}\right)u_{vs(e_i)}\otimes s_{e_l}s_{e_i}^*\\
\end{align*}
Now,
\[\sum_{\substack{j\in\{1,\ldots,m\}\\s(e_j)= v}}u_{r(e_j)r(e_l)}=\sum_{\substack{q\in V\\(v,q)\in E}}u_{qr(e_l)}\]
 and for  $q\in V$ with  $(v,q) \notin E$ we have $u_{vs(e_l)}u_{qr(e_l)}=0$ by Relations \eqref{QA4}. 
Thus, for any $l\in\{1,\ldots,m\}$, we have using Relations \eqref{QA2}
\[u_{vs(e_l)}\sum_{\substack{q\in V\\(v,q)\in E}}u_{qr(e_l)}=u_{vs(e_l)}\sum_{q\in V}u_{qr(e_l)}=u_{vs(e_l)}\]
and hence:
\begin{align*}
\sum_{\substack{j\in\{1,\ldots,m\}\\s(e_j)= v}} s_{e_j}' (s_{e_j}')^*  &= \sum_{l=1}^m \sum_{\substack{i\in\{1,\ldots,m\}\\r(e_i)=r(e_l)}}  u_{vs(e_l)}u_{vs(e_i)}\otimes s_{e_l}s_{e_i}^*
\end{align*}
Since $\Gamma$ has no multiple edges by assumption, $r(e_i)=r(e_l)$ and $s(e_i)=s(e_l)$ implies $e_i=e_l$. We thus infer using Relations \eqref{QA1}:
\begin{align*}
\sum_{\substack{j\in\{1,\ldots,m\}\\s(e_j)= v}} s_{e_j}' (s_{e_j}')^*&= \sum_{l=1}^m  u_{vs(e_l)}\otimes s_{e_l}s_{e_l}^*
\end{align*}
Now, for $V':=\{q\in V\;|\; s^{-1}(q)\neq \emptyset\}$, we have, using the relations in $C^*(\Gamma)$:
\begin{align*}
\sum_{l=1}^m  u_{vs(e_l)}\otimes s_{e_l}s_{e_l}^*=\sum_{q \in V'} \sum_{\substack{l\in\{1,\ldots,m\}\\s(e_l) =q}} u_{vq} \otimes s_{e_l}s_{e_l}^*=\sum_{q\in V'}u_{vq}\otimes p_q
\end{align*}

Since we know that $u_{vq} = 0$ for $q \notin V'$ by Lemma \ref{eqzero}, we finally conclude
\begin{align*}
\sum_{\substack{j\in\{1,\ldots,m\}\\s(e_j)= v}} s_{e_j}' (s_{e_j}')^* =\sum_{q =1}^n u_{vq} \otimes p_q =p_v'.
\end{align*}
This settles the existence of $\alpha$.

\subsection{The map $\alpha$ is a left action and $\beta$ is a right action}
\label{Step42}

We only prove this claim for $\alpha$, the proof for $\beta$ being analogous.

\subsubsection{$(\Delta \otimes \mathrm{id}) \circ \alpha=(\mathrm{id} \otimes \alpha)\circ \alpha$ holds and $\alpha$ is unital}

Using Relations \eqref{QA3}, this is straightforward to check.

It remains to show that
\[\mathcal S:=\mathrm{span}\; \alpha(C^*(\Gamma))(C(\QBan(\Gamma)) \otimes 1)\]
is dense in $C(\QBan(\Gamma)) \otimes C^*(\Gamma)$, which we will do in the sequel.

\subsubsection{The elements $1\otimes p_l,1\otimes s_{e_l}$ and $1\otimes s_{e_l}^*$ are in $\mathcal S$}
 Using Relations \eqref{QA1} and \eqref{QA2} we infer:
\begin{align*}
\mathcal S\ni\sum_{i=1}^n \alpha(p_i)(u_{il} \otimes 1) = \sum_{i=1}^n \sum_{j=1}^n u_{ij} u_{il} \otimes p_j = \sum_{i=1}^n u_{il} \otimes p_l = 1 \otimes p_l
\end{align*}
Moreover, for $e_l \in E$ we get, using Relations \eqref{QA1} and $V':=\{v\in V\;|\; s^{-1}(v)\neq\emptyset\}$:
\begin{align*}
\sum_{v \in V'} &\sum_{\substack{j\in\{1,\ldots,m\}\\s(e_j)= v}} \alpha(s_{e_j})(u_{r(e_j)r(e_l)}u_{vs(e_l)} \otimes 1)\\
&=\sum_{v \in V'} \sum_{\substack{j\in\{1,\ldots,m\}\\s(e_j)= v}} \left(\sum_{k=1}^m u_{vs(e_k)}u_{r(e_j)r(e_k)}u_{r(e_j)r(e_l)}u_{vs(e_l)} \otimes s_{e_k}\right)\\
&=\sum_{v \in V'} \left(\sum_{\substack{k\in\{1,\ldots,m\}\\r(e_k)=r(e_l)}} u_{vs(e_k)}\left(\sum_{\substack{j\in\{1,\ldots,m\}\\s(e_j)= v}} u_{r(e_j)r(e_l)}\right)u_{vs(e_l)} \otimes s_{e_k}\right)
\end{align*}
We proceed similar to Step \ref{Step413}.
By Relations \eqref{QA4}, we know $u_{qr(e_l)}u_{vs(e_l)} =0$ for $(v,q) \notin E$. Thus, by Relations \eqref{QA1} and \eqref{QA2} and using that $\Gamma$ has no multiple edges, we obtain:
\allowdisplaybreaks
\begin{align*}
\sum_{v \in V'} &\sum_{\substack{j\in\{1,\ldots,m\}\\s(e_j)= v}} \alpha(s_{e_j})(u_{r(e_j)r(e_l)}u_{vs(e_l)} \otimes 1)\\
&=\sum_{v \in V'} \left(\sum_{\substack{k\in\{1,\ldots,m\}\\r(e_k)=r(e_l)}} u_{vs(e_k)}\left(\sum_{q=1}^n u_{qr(e_l)}\right)u_{vs(e_l)} \otimes s_{e_k}\right)\\
&=\sum_{v \in V'} \left(\sum_{\substack{k\in\{1,\ldots,m\}\\r(e_k)=r(e_l)}} u_{vs(e_k)}u_{vs(e_l)} \otimes s_{e_k}\right)\\
&=\sum_{v \in V'} u_{vs(e_l)} \otimes s_{e_l}
\end{align*}
Finally, Lemma \ref{eqzero} yields $u_{vs(e_l)}=0$ for $v\notin V'$. Hence, using Relations \eqref{QA2}:
\begin{align*}
\mathcal S\ni\sum_{v \in V'} \sum_{\substack{j\in\{1,\ldots,m\}\\s(e_j)= v}} \alpha(s_{e_j})(u_{r(e_j)r(e_l)}u_{s(e_j)s(e_l)} \otimes 1)
=\sum_{i=1}^n u_{is(e_l)} \otimes s_{e_l}
= 1 \otimes s_{e_l}
\end{align*}

Define $V'' := \{v \in V \;|\; r^{-1}(v)\neq\emptyset\}$. Similar to the computations above, we get  
\begin{align*}
\mathcal S\ni\sum_{v \in V''} \sum_{\substack{j\in\{1,\ldots,m\}\\s(e_j)= v}} \alpha(s_{e_j}^*)(u_{s(e_j)s(e_l)}u_{r(e_j)r(e_l)} \otimes 1) = 1 \otimes s_{e_l}^*.
\end{align*}

\subsubsection{If $1\otimes x,1\otimes y\in \mathcal S$, then also $1\otimes xy\in\mathcal S$}
The remainder of the proof of Step \ref{Step42} consists in general facts for actions of compact matrix quantum groups.

We may write $1\otimes x\in\mathcal S$ and $1\otimes y\in\mathcal S$ as
\begin{align*}
1 \otimes x = \sum_{i=1}^l \alpha(z_i)(w_i \otimes 1), \qquad 1 \otimes y = \sum_{j=1}^k \alpha(t_j)(v_j \otimes 1)
\end{align*}    
for some $z_i,t_j \in C^*(\Gamma)$ and $w_i, v_j \in C(\QBan(\Gamma))$. Therefore
\begin{align*}
1 \otimes xy &= \sum_{i=1}^l \alpha(z_i)(w_i \otimes 1)(1 \otimes y)\\
&= \sum_{i=1}^l \alpha(z_i)(1 \otimes y) (w_i \otimes 1)\\
&= \sum_{i=1}^l \sum_{j=1}^k \alpha(z_i t_j) (v_j w_i \otimes 1)\in\mathcal S
\end{align*}

\subsubsection{$\mathcal S$ is dense in $C(\QBan(\Gamma)) \otimes C^*(\Gamma)$}

Summarizing, we get that $1 \otimes w \in \mathcal S$ for all monomials $w$ in $p_i, s_{e_j}, s_{e_j}^*$, $1 \leq i \leq n$, $1 \leq j \leq m$. Since $\alpha$ is unital, we also have:
\[C(\QBan(\Gamma)) \otimes 1 \subseteq \alpha(C^*(\Gamma))(C(\QBan(\Gamma))\otimes 1)\subseteq \mathcal S\]
We conclude that $\mathcal S$ is dense in $C(\QBan(\Gamma)) \otimes C^*(\Gamma)$, which settles Step \ref{Step42}.

\subsection{The quantum group $\QBan(\Gamma)$ acts maximally on $C^*(\Gamma)$}\label{maximal}

For proving the maximality, let $G= (C(G), u)$ be another compact matrix quantum group acting on $C^*(\Gamma)$ by $\alpha': C^*(\Gamma) \to C(G) \otimes C^*(\Gamma)$ and $\beta': C^*(\Gamma) \to  C(G) \otimes C^*(\Gamma)$ in the way $\QBan(\Gamma)$ acts on $C^*(\Gamma)$ via  $\alpha$ and $\beta$. We want to show that there is a *-homomorphism $C(\QBan(\Gamma)) \to C(G)$ sending generators to generators. Thus, we need to compute that the generators $u_{ij}$ of $C(G)$ fulfill the relations of $C(\QBan(\Gamma))$. 

\subsubsection{The Relations \eqref{QA1} hold in $C(G)$}

The equation 
\begin{align*}
\sum_{k=1}^n u_{ik} \otimes p_k = \alpha'(p_i) = \alpha'(p_i)^* = \sum_{k =1}^n u_{ik}^* \otimes p_k
\end{align*}
yields $u_{ij} = u_{ij}^*$ after multiplying from the left with $1\otimes p_j$. We also have 
\begin{align*}
\sum_{i=1}^n u_{ji} u_{ki} \otimes p_i = \sum_{i,l=1}^n u_{ji} u_{kl} \otimes p_i p_l= \alpha'(p_j) \alpha'(p_k)= \delta_{jk}\alpha'(p_j)= \sum_{i=1}^n \delta_{jk} u_{ji} \otimes p_i
\end{align*}
from which we infer $u_{ji}u_{ki}=\delta_{jk}u_{ji}$. Using $\beta'$, we also obtain $u_{ij}u_{ik}=\delta_{jk}u_{ij}$.

\subsubsection{The Relations \eqref{QA2} hold in $C(G)$}

From
\begin{align*}
\sum_{k =1}^n 1 \otimes p_k = 1 \otimes 1 = \alpha'(1) = \sum_{i=1}^n \alpha'(p_i) = \sum_{k=1}^n \left(\sum_{i=1}^n u_{ik} \right) \otimes p_k
\end{align*}
we deduce $\sum_{i=1}^n u_{ik}=1$, and likewise $\sum_{i=1}^nu_{ki}=1$ using $\beta'$.

\subsubsection{The Relations \eqref{QA3} hold in $C(G)$}

Using $s_{e_l}^*s_{e_t}=\delta_{lt}p_{r(e_l)}$ (see Section \ref{SectGraphCStar}) and Relations \eqref{QA1} in $C(G)$, we obtain for any $j$:
\allowdisplaybreaks
\begin{align*}
\sum_{q=1}^n u_{r(e_j)q} \otimes p_q
&=\alpha'(p_{r(e_j)})\\
&=\alpha'(s_{e_j}^*s_{e_j})\\
&= \sum_{l,t=1}^m u_{r(e_j)r(e_l)} u_{s(e_j)s(e_l)}u_{s(e_j)s(e_t)}u_{r(e_j)r(e_t)} \otimes s_{e_l}^*s_{e_t}\\
&=\sum_{l=1}^m u_{r(e_j)r(e_l)}u_{s(e_j)s(e_l)}u_{r(e_j)r(e_l)} \otimes p_{r(e_l)}
\end{align*}
Multiplication with $1\otimes p_k$ yields:
\begin{align*}
u_{r(e_j)k}=\sum_{\substack{l\in\{1,\ldots,m\}\\r(e_l)=k}} u_{r(e_j)k}u_{s(e_j)s(e_l)}u_{r(e_j)k} 
\end{align*}
If $r^{-1}(k)=\emptyset$, then $u_{r(e_j)k}=0$ and hence $u_{s(e_j)i}u_{r(e_j)k}=u_{r(e_j)k}u_{s(e_j)i}=0$ for all $i\in V$.

Otherwise, if $r^{-1}(k)\neq \emptyset$, we use Relations \eqref{QA1} and \eqref{QA2} in $C(G)$ and get
\begin{align*}
\sum_{\substack{l\in\{1,\ldots,m\}\\r(e_l) =k}} u_{r(e_j)k}u_{s(e_j)s(e_l)}u_{r(e_j)k}  = u_{r(e_j)k}= u_{r(e_j)k}^2 
=\sum_{i=1}^n u_{r(e_j)k} u_{s(e_j)i} u_{r(e_j)k}
\end{align*}
 and therefore 
 \begin{align*}
 \sum_{\substack{i\in V\\(i,k)\notin E}}  u_{r(e_j)k} u_{s(e_j)i} u_{r(e_j)k} =0. 
 \end{align*}
 Since 
 \begin{align*}
 u_{r(e_j)k} u_{s(e_j)i} u_{r(e_j)k} =(u_{s(e_j)i} u_{r(e_j)k})^*u_{s(e_j)i} u_{r(e_j)k}
 \end{align*} 
 holds, the above is a vanishing sum of positive elements -- and hence each summand vanishes. This yields $u_{s(e_j)i} u_{r(e_j)k}= 0$ for all $(i,k) \notin E$.
 
 \subsubsection{The Relations \eqref{QA4} hold in $C(G)$}
 
 The argument is analogous to the one for proving Relations \eqref{QA3} when replacing $\alpha'$ by $\beta'$.\\
 
The proof of the main theorem is complete.

\begin{rem}\label{RemBicInsp}
Let $\Gamma$ be a finite graph with $n$ vertices $V=\{ 1, ..., n\}$ and $m$ edges $E = \{ e_1, ..., e_m\}$. In \cite{QBic}, Bichon showed that $\QBic(\Gamma)$ is the quantum symmetry group of $\Gamma$ in his sense, where 
\begin{align*}
&\beta_V: C(V) \to C(\QBic(\Gamma))\otimes C(V) , g_i \mapsto \sum_{k=1}^n  u_{ki}\otimes g_k,\\
&\beta_E: C(E) \to C(\QBic(\Gamma))\otimes C(E) , f_j \mapsto \sum_{l=1}^m u_{s(e_l)s(e_j)}u_{r(e_l)r(e_j)}\otimes f_l,
\end{align*}
define actions of $\QBic(\Gamma)$ on $C(V)$ and $C(E)$, respectively. Those actions inspired us, how an action of a compact matrix quantum group on $C^*(\Gamma)$ should look like. However, note that edges in the commutative $C^*$-algebra $C(E)$ of continuous functions on $E$ are represented as projections unlike in the case of $C^*(\Gamma)$. Therefore, the quantum symmetry group of $C^*(\Gamma)$ is $\QBan(\Gamma)$ rather than $\QBic(\Gamma)$. On the other hand, if we consider the quotient of $C^*(\Gamma)$ by the relations $s_e=s_e^*$, its quantum symmetry group is $\QBic(\Gamma)$. Indeed, selfadjointness of $s_e$ yields
\begin{align*}
\sum_{l=1}^m  u_{s(e_j)s(e_l)}u_{r(e_j)r(e_l)} \otimes s_{e_l} = \alpha(s_{e_j}) = \alpha(s_{e_j})^* = \sum_{l=1}^m  u_{r(e_j)r(e_l)}u_{s(e_j)s(e_l)} \otimes s_{e_l},
\end{align*}
from which we obtain Relations \eqref{QA5} by multiplication with $(1 \otimes s_{e_i}^*)$ from the left.
\end{rem}

\bibliographystyle{plain}
\bibliography{QSym}

\begin{thebibliography}{10}

\bibitem{QBan}
Teodor Banica.
\newblock Quantum automorphism groups of homogeneous graphs.
\newblock {\em J. Funct. Anal.}, 224(2):243--280, 2005.

\bibitem{BanBic}
Teodor Banica and Julien Bichon.
\newblock Quantum automorphism groups of vertex-transitive graphs of order
  {$\leq11$}.
\newblock {\em J. Algebraic Combin.}, 26(1):83--105, 2007.

\bibitem{Che}
Teodor Banica, Julien Bichon, and Gaetan Chenevier.
\newblock Graphs having no quantum symmetry.
\newblock {\em Ann. Inst. Fourier}, pages 955--971, 2007.

\bibitem{GoBhSk}
Jyotishman Bhowmick, Debashish Goswami, and Adam Skalski.
\newblock Quantum isometry groups of 0-dimensional manifolds.
\newblock {\em Trans. Amer. Math. Soc.}, 363(2):901--921, 2011.

\bibitem{QBic}
Julien Bichon.
\newblock Quantum automorphism groups of finite graphs.
\newblock {\em Proc. Amer. Math. Soc.}, 131(3):665--673, 2003.

\bibitem{Bic2}
Julien Bichon.
\newblock Free wreath product by the quantum permutation group.
\newblock {\em Algebr. Represent. Theory}, 7(4):343--362, 2004.

\bibitem{Ca}
Arthur Chassaniol.
\newblock Quantum automorphism group of the lexicographic product of finite
  regular graphs.
\newblock {\em J. Algebra}, pages 23--45, 2016.

\bibitem{CK}
Joachim Cuntz and Wolfgang Krieger.
\newblock A class of {$C^{\ast} $}-algebras and topological {M}arkov chains.
\newblock {\em Invent. Math.}, 56(3):251--268, 1980.

\bibitem{Eil}
S\o{}ren Eilers, Gunnar Restorff, Efren Ruiz, and Adam S\o{}rensen.
\newblock The complete classification of unital graph {$C^{\ast} $}-algebras:
  {G}eometric and strong.
\newblock {\em arXiv:1611.07120}, 2016.

\bibitem{Ful}
Melanie Fulton.
\newblock The quantum automorphism group and undirected trees.
\newblock {\em PhD Thesis, Virginia}, 2006.

\bibitem{QISO}
Debashish Goswami and Jyotishman Bhowmick.
\newblock {\em Quantum isometry groups}.
\newblock Infosys Science Foundation Series. Springer, New Delhi, 2016.
\newblock Infosys Science Foundation Series in Mathematical Sciences.

\bibitem{Nesh}
Sergey Neshveyev and Lars Tuset.
\newblock {\em Compact quantum groups and their representation categories},
  volume~20 of {\em Cours Sp\'ecialis\'es [Specialized Courses]}.
\newblock Soci\'et\'e Math\'ematique de France, Paris, 2013.

\bibitem{Pod}
Piotr Podle\'s.
\newblock Symmetries of quantum spaces. {S}ubgroups and quotient spaces of
  quantum {${\rm SU}(2)$} and {${\rm SO}(3)$} groups.
\newblock {\em Comm. Math. Phys.}, 170(1):1--20, 1995.

\bibitem{Rae}
Iain Raeburn.
\newblock {\em Graph algebras}, volume 103 of {\em CBMS Regional Conference
  Series in Mathematics}.
\newblock Published for the Conference Board of the Mathematical Sciences,
  Washington, DC; by the American Mathematical Society, Providence, RI, 2005.

\bibitem{SW16}
Roland Speicher and Moritz Weber.
\newblock Quantum groups with partial commutation relations.
\newblock {\em arXiv:1603.09192}, 2016.

\bibitem{Tim}
Thomas Timmermann.
\newblock {\em An invitation to quantum groups and duality}.
\newblock EMS Textbooks in Mathematics. European Mathematical Society (EMS),
  Z\"urich, 2008.

\bibitem{WanSn}
Shuzhou Wang.
\newblock Quantum symmetry groups of finite spaces.
\newblock {\em Comm. Math. Phys.}, 195(1):195--211, 1998.

\bibitem{CMQG1}
S.~L. Woronowicz.
\newblock Compact matrix pseudogroups.
\newblock {\em Comm. Math. Phys.}, 111(4):613--665, 1987.

\bibitem{CMQG2}
S.~L. Woronowicz.
\newblock A remark on compact matrix quantum groups.
\newblock {\em Lett. Math. Phys.}, 21(1):35--39, 1991.

\end{thebibliography}
%\begin{thebibliography}{Lit}
%\bibitem[XXX]{XXX} Vorname \textsc{Nachname},
%   \emph{Titel},
%   Journal \textbf{1} (2012), no. 1, 1--100. 
%\end{thebibliography} 

\end{document}